\documentclass[12pt,leqno]{amsart}
\usepackage{amssymb,amsfonts,amsmath,amsthm,amscd,mathrsfs}
\usepackage{psfrag}
\usepackage{color}
\setlength{\textwidth}{14cm}
\textheight22cm
\addtolength{\topmargin}{-1cm}
\addtolength{\oddsidemargin}{-1cm}
\addtolength{\evensidemargin}{-1cm}
\hoffset2.5pt

\def\IF{{\mathbb F}}

\def\IN{{\mathbb N}}

\def\n{\noindent}
\def\dis{\displaystyle}

\def\ov{\overline}
\def\wt{\widetilde}

\def\cal{\mathcal}

\def\cL{{\cal L}}
\def\cD{{\cal D}}

\def\cN{{\cal N}}

\def\cZ{{\cal Z}}

\def\cT{{\cal T}}

\def\Aut{{\rm Aut}}
\def\Im{{\rm Im}}
\def\pr{{\rm pr}}

\def\uH{\underline{H}}
\def\uG{\underline{G}}

\dimendef\dimen=0

\newtheorem{theorem}{Theorem}[section]
\newtheorem{lemma}[theorem]{Lemma}
\newtheorem{corollary}[theorem]{Corollary}
\newtheorem{proposition}[theorem]{Proposition}

\newtheorem{remark}[theorem]{Remark}
\newtheorem{example}[theorem]{Example}
\newtheorem{question}[theorem]{Question}

\thispagestyle{empty}

\begin{document}
\null
%\today
%\baselineskip14pt
\title[Lattices in products of trees and a theorem of H.C. Wang]{Lattices in products of trees and a theorem of H.C. Wang}
\author[M. Burger]{Marc Burger}
\address{M.B. \\ Department of Mathematics, ETH Zurich and The Institute for Advanced Study, Princeton.}
\author[S. Mozes]{Shahar Mozes}
\address{S.M. \\ Institute of Mathematics, Hebrew University of Jerusalem}
\thanks{M. B. thanks the Institute for Advanced Study where this work was completed for
its hospitality and support. S. M. thanks ETH Zurich for its hospitality and support as well as the support of  ISF grant 1003/11 and BSF grant 2010295}
\maketitle

\setcounter{section}{-1}
\section{Introduction}

In 1967 H.C. Wang \cite{Wa} showed that for a connected semisimple group $G$ without compact factors, a lattice $\Gamma < G$ is contained in only finitely many discrete subgroups. Shortly after, Kazhdan and Margulis \cite{KaMa} proved the stronger result that under the same hypothesis on $G$ there is a constant $c_G > 0$ such that ${\rm Vol}(\Gamma \backslash G) \ge c_G$ for every lattice $\Gamma < G$.

\medskip
In the meantime the scope of the study of lattices in locally compact groups has been greatly extended, including families of locally compact groups, like automorphism groups of trees \cite{BaLu}, products of trees \cite{BuMoZ}, and topological Kac-Moody groups \cite{Re}.

\medskip
In this context it was observed by Bass and Kulkarni that for the automorphism group of a $d$-regular tree $\cT_d$ the analog of Kazhdan-Margulis as well as Wang's theorem, fail.  They constructed for every $d \ge 3$ an infinite ascending chain $\Gamma_0 \lneq \Gamma_1 \lneq\cdots$ of discrete subgroups such that $\Gamma_\ell \backslash \cT_d$ is a geometric loop; when $d$ is composite they exhibited examples where $\Gamma_\ell \backslash \cT_d$ is a geometric edge \cite{BaKu}. On the other hand, when $d \ge 3$ is a prime number, a deep conjecture of Goldschmidt-Sims in finite group theory implies that there are, up to conjugacy, only finitely many discrete subgroups $\Gamma < {\rm Aut}\, \cT_d$ such that  $\Gamma \backslash \cT_d$ is an edge; this conjecture has been established for $d =3$ \cite{Go}. For a product $\cT_p \times \cT_q$ of trees of prime degrees, Y. Glasner \cite{Gl} proved the remarkable result that up to conjugacy there are only finitely many $\Gamma < {\rm Aut}\, \cT_p \times  {\rm Aut}\, \cT_q$ with non-discrete projections and such that $\Gamma \backslash (\cT_p \times \cT_q)$ is a geometric square.

\medskip
We propose to study this finiteness problem in the framework of the theory of lattices in products of trees developed in \cite{BuMo1},\,\cite{BuMo2}. Our aim is to establish Wang's theorem for co-compact lattices in a product ${\rm Aut}\, (T_1) \times {\rm Aut}\, (T_2)$ of automorphism groups of regular trees by imposing non-properness and certain local transitivity properties for their actions on the individual factors $T_1$ and $T_2$. 
The following follows from a more general result Theorem~\ref{thm:M} proved in section~\ref{sec:ProdTrees}

\begin{theorem}\label{theo0.1} Let $\Gamma <  {\rm Aut}\, T_1 \times {\rm Aut}\, T_2$ be a co-compact lattice and $G_i = \overline{{\rm pr}_i (\Gamma)}$ the closures of its projections.
%\medskip
Assume that each $G_i$ 
is non-discrete, locally quasiprimitive of constant, almost simple type.
%is vertex transitive, non-discrete and locally quasi-primitive of almost simple type. 
Then $\Gamma$ is contained in only finitely many discrete subgroups $\Lambda$ with $\Lambda < G_1 \times G_2$.
\end{theorem}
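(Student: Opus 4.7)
\emph{Strategy.} Set $G = G_1 \times G_2$. Since $\Gamma$ is cocompact in $\mathrm{Aut}(T_1)\times\mathrm{Aut}(T_2)$ and contained in its closed subgroup $G$, it is already a cocompact lattice in $G$; any discrete $\Lambda$ with $\Gamma \leq \Lambda \leq G$ is then automatically a cocompact lattice in $G$, and
$[\Lambda : \Gamma] = \mathrm{Vol}(\Gamma \backslash G)/\mathrm{Vol}(\Lambda \backslash G)$.
The plan is (i) to establish a Kazhdan--Margulis-type positive lower bound on covolumes of discrete subgroups of $G$ of finite covolume, yielding a uniform upper bound on $[\Lambda : \Gamma]$; and (ii) to extract finiteness from bounded index via a normal-core/normalizer argument.

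\emph{Step 1: uniform lower bound on covolumes.} The heart of the argument is to exhibit a constant $c = c(G_1,G_2) > 0$ such that $\mathrm{Vol}(\Lambda' \backslash G) \geq c$ for every discrete $\Lambda' \leq G$ of finite covolume. I expect this to be the main obstacle: by the Bass--Kulkarni construction no such bound holds for a single factor $\mathrm{Aut}(T_d)$, so the hypotheses of non-discreteness, local quasiprimitivity, and constant almost simple type on \emph{both} factors must be used in an essential way. A plausible route is to use the almost simple local action on one factor to rigidify the shape of vertex stabilizers $\Lambda'_v$, and to couple the two factors through the irreducibility forced by dense projections, turning this rigidity into a quantitative lower bound for the covolume via the tree-lattice volume formula $\mathrm{Vol}(\Lambda'\backslash G) \asymp \sum_{[v]}|\Lambda'_v|^{-1}$. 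Granted such a $c$, one obtains the uniform bound $[\Lambda : \Gamma] \leq N := \mathrm{Vol}(\Gamma\backslash G)/c$ for every admissible $\Lambda$.

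\emph{Step 2: from bounded index to finitely many overgroups.} Since $\Gamma$ is finitely generated (being a cocompact lattice in a compactly generated group), it has only finitely many subgroups of each given finite index. For every admissible $\Lambda$, the left-translation action of $\Lambda$ on the coset set $\Lambda/\Gamma$ yields a homomorphism $\Lambda \to \mathrm{Sym}(\Lambda/\Gamma)$ whose kernel
$K_\Lambda := \bigcap_{\lambda \in \Lambda}\lambda \Gamma \lambda^{-1}$
is normal in $\Lambda$, contained in $\Gamma$, and of index at most $N!$ in $\Gamma$; hence only finitely many subgroups $K$ arise as $K_\Lambda$. Since $K_\Lambda \trianglelefteq \Lambda$, every such $\Lambda$ lies in $N_G(K_\Lambda)$, and it suffices to show that for each finite-index $K \leq \Gamma$ the normalizer $N_G(K)$ is a discrete subgroup of $G$ of finite index over $\Gamma$.

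\emph{Step 3: discreteness of $N_G(K)$ and conclusion.} Since $K$ has finite index in $\Gamma$ and $\Gamma$ projects densely to each almost simple $G_i$, the closure $\overline{\mathrm{pr}_i K}$ is a finite-index closed subgroup of $G_i$, which by almost simplicity must equal $G_i$; therefore
$Z_G(K) = Z_{G_1}(\overline{\mathrm{pr}_1 K}) \times Z_{G_2}(\overline{\mathrm{pr}_2 K}) \subseteq Z(G_1) \times Z(G_2)$
is finite. A sufficiently small compact open subgroup $U \leq N_G(K)$ has finite orbit under conjugation on each of finitely many generators of the finitely generated group $K$, so some finite-index open subgroup of $U$ centralizes $K$ and lies in the finite group $Z_G(K)$; shrinking $U$ forces it to be trivial, so $N_G(K)$ is discrete. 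Finally, discreteness together with $\mathrm{Vol}(N_G(K)\backslash G) \leq \mathrm{Vol}(\Gamma\backslash G) < \infty$ gives $[N_G(K):\Gamma] < \infty$, so each of the finitely many admissible $K$'s contributes only finitely many intermediate subgroups, proving the theorem.
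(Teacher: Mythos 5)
Your Steps 2 and 3 are essentially the paper's general finiteness criterion (Proposition~\ref{prop1.1} and Corollary~\ref{cor1.2}): bounded index over the finitely generated $\Gamma$, passage to the normal core $K_\Lambda$, and discreteness of $\cN_G(K)$ via triviality of the centraliser. That part is sound, although your justification that $\cZ_G(K)$ is small via ``almost simplicity of $G_i$'' conflates the hypothesis (the \emph{local} action $\uG_i(x)$ is of almost simple type) with a global property of $G_i$; the correct and simpler argument, used in Corollary~\ref{cor1.2}, is that the centraliser in ${\rm Aut}\,T_i$ of any group acting with finitely many vertex orbits is trivial.

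The genuine gap is Step 1, which you yourself flag as ``the main obstacle'' and for which you offer only a heuristic. As you state it --- a constant $c(G_1,G_2)>0$ bounding below the covolume of \emph{every} discrete subgroup of finite covolume in $G_1\times G_2$ --- this is precisely the paper's open Question~\ref{que02}; the authors do not prove it and explicitly do not know whether it holds. The theorem does not need it. What the paper actually proves (Theorem~\ref{thm:M} via Lemma~\ref{lem2.3}) is a statement \emph{relative to} $\Gamma$: there is a neighborhood $W$ of $(e,e)$, depending on $\Gamma$ (through a fundamental set $\cD_1\times\cD_2$ and finitely many elements of $\Gamma$ realizing the local action), such that any discrete overgroup $\Lambda\supset\Gamma$ with $\Lambda\cap W\neq\{(e,e)\}$ would contain a nontrivial element fixing a large ball in one factor; conjugating by elements of $\Gamma$ forces the image of $\Lambda\cap(G_1(w)\times G_{2,1}(v))$ in $\uG_1(w)$ to contain a nontrivial normal subgroup, hence the simple group $M_1$, while the same group injects into $G_{2,1}(v)$ and so has a Jordan--H\"older series built from sections of the edge stabilizer $S_2$. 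This yields ``$M_1$ is a section of $S_2$'' and symmetrically ``$M_2$ is a section of $S_1$''; the almost simple type hypothesis plus the Schreier conjecture then give $|M_1|<|M_2|<|M_1|$, a contradiction. Hence every overgroup lies in $R(\Gamma,W)$, which is finite by the criterion. None of this local analysis --- the actual content of the theorem --- appears in your proposal, so the argument as written does not close.
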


\medskip
Let us say that  a group $G$ acting on a tree is of constant type if for each vertex $x$, $\uG(x)$ the local permutation group induced by the action of $G(x)$,  the stabilizer of $x$, on $E(x)$, the set of edges based at $x$, is isomorphic to a fixed permutation group.
Recall that $G_i$ is called locally quasi-primitive if for every vertex $x$ of $T_i$ the permutation group $\uG_i(x)$ acting on on $E(x)$ is quasi-primitive. 
A quasi-primitive permutation group has one or two minimal normal subgroups and is called of almost simple type if there is a unique minimal normal subgroup which is simple non-abelian \cite{Pr}. The theorem applies for instance when $G_1$ and $G_2$ are locally $2$-transitive and the $2$-transitive permutation groups have non-abelian socle.

Under the weaker assumptions that $G_1$ and $G_2$ are locally quasi primitive we show in Proposition~\ref{prop2.2} that any
closed cocompact subgroup in $G_1 \times  G_2$ with dense projections is either discrete or open.
% subgroup containing $\Gamma$  is either discrete or has open closure.

\medskip
Theorem~\ref{theo0.1}  suggests the following question:

\begin{question}\label{que02} Given $G_i < {\rm Aut}\, T_i$ closed, non-discrete, locally quasi-primitive, does there exists a constant $C > 0$ such that ${\rm Vol}\big(\Gamma \backslash (G_1 \times G_2)\big) \ge C$ whenever $\Gamma < G_1 \times G_2$ is a co-compact lattice with dense projections.
\end{question}

\begin{example}\label{exam0.3} \rm Examples of lattices satisfying the hypothesis of Theorem \ref{theo0.1} can be found in Burger-Mozes \cite{BuMo2} and in Rattaggi \cite{Ra1},\,\cite{Ra2}. One constructs a finite square complex $Y$ with one vertex whose universal covering is $\cT_n \times \cT_m$. Then $\Gamma = \pi_1(Y)$ acts (simply) transitively on the vertices of $\cT_n \times \cT_m$. One obtains then up to permutation isomorphism one permutation group $P^m < S_m$, $P^n < S_n$ for each factor. Examples with non-discrete projections have been constructed for all alternating groups $(A_n, A_m)$ with $n,m$ even and $n \ge 30$, $m \ge 38$. Rattaggi has constructed many low degree examples among which $(A_6,M_{12})$, and $(A_6, A_6)$. All these are locally $2$-transitive transitive on both sides. An interesting example, also due to Rattaggi, of an irreducible lattice acting on $\cT_6 \times \cT_{10}$ realizes the pair $(A_6,S_5)$ where $A_6 < S_6$ and $S_5$ is endowed with its primitive action on the $10$ element set consisting of all $2$-element subsets of $\{1,2,3,4,5\}$.
\end{example}

\section{A general finiteness criterion}
\setcounter{equation}{0}

Given a lattice $\Gamma < G$ in a locally compact group $G$, a neighborhood $W$ of $e$ in $G$ and $\varepsilon > 0$, we define
\begin{align*}
R(\Gamma,W) & = \big\{\Lambda < G: \; \Lambda > \Gamma \;\mbox{and} \; \Lambda \cap W =\{e\}\big\}
\\[1ex]
\cL(\Gamma,\varepsilon) & = \big\{ \Lambda < G: \; \Lambda > \Gamma, \; \mbox{$\Lambda$ is discrete and ${\rm Vol}(\Lambda \backslash G) \ge \varepsilon$}\big\}.
\end{align*}

\n
Clearly, $R(\Gamma,W) \subset \cL(\Gamma, \varepsilon)$ where $\varepsilon = {\rm Vol}(\widetilde{W})$ and $\wt{W}$ is an open symmetric neighborhood of $e$ with $\wt{W}^2 \subset W$.

\begin{proposition}\label{prop1.1}
Let $\Gamma < G$ be a lattice. Assume that $\Gamma$ is finitely generated and that the centraliser $\cZ_G(\Gamma')$ of $\Gamma'$ in $G$ is discrete, for every subgroup $\Gamma' < \Gamma$ of finite index. Then $\cL(\Gamma,\varepsilon)$ is finite for every $\varepsilon > 0$. In particular, $R(\Gamma,W)$ is finite for every neighborhood $W \ni e$.
\end{proposition}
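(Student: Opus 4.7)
The plan is to reduce finiteness of $\cL(\Gamma,\varepsilon)$ via two successive refinements---first bounding the index $[\Lambda:\Gamma]$, then controlling the normal core---so that the surviving count becomes one of subgroups between two explicit discrete groups. The inclusion $R(\Gamma,W)\subset\cL(\Gamma,{\rm Vol}(\wt W))$ noted before the statement already reduces the second assertion to the first. For the first, the starting point is the volume bound $[\Lambda:\Gamma]={\rm Vol}(\Gamma\backslash G)/{\rm Vol}(\Lambda\backslash G)\leq{\rm Vol}(\Gamma\backslash G)/\varepsilon=:N$ for every $\Lambda\in\cL(\Gamma,\varepsilon)$.

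I would then attach to each $\Lambda$ its normal core $\Gamma_\Lambda:=\bigcap_{\lambda\in\Lambda}\lambda\Gamma\lambda^{-1}$, which is the kernel of the permutation action of $\Lambda$ on the coset space $\Lambda/\Gamma$ and therefore satisfies $[\Gamma:\Gamma_\Lambda]\leq[\Lambda:\Gamma_\Lambda]\leq N!$. Finite generation of $\Gamma$ guarantees that $\Gamma$ has only finitely many subgroups of index at most $N!$, so $\Gamma_\Lambda$ takes only finitely many values as $\Lambda$ varies over $\cL(\Gamma,\varepsilon)$. It therefore suffices to fix a finite-index subgroup $\Gamma_0\leq\Gamma$ and prove finiteness of $\{\Lambda\in\cL(\Gamma,\varepsilon):\Gamma_\Lambda=\Gamma_0\}$; each such $\Lambda$ lies in the normalizer $\cN_G(\Gamma_0)$, and both $\Gamma$ and $\Lambda$ sit between $\Gamma_0$ and $\cN_G(\Gamma_0)$.

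The heart of the matter is to show that $\cN_G(\Gamma_0)$ is itself a discrete subgroup of $G$. Conjugation yields a short exact sequence
\[
1 \to \cZ_G(\Gamma_0) \to \cN_G(\Gamma_0) \to \Aut(\Gamma_0),
\]
whose kernel is discrete, hence countable (working in a $\sigma$-compact $G$, which covers the applications at hand), by the centralizer hypothesis. The target $\Aut(\Gamma_0)$ is itself countable because $\Gamma_0$ is finitely generated, any automorphism being determined by its values on a finite generating set. Consequently $\cN_G(\Gamma_0)$ is a countable, locally compact, Hausdorff topological group, and a Baire-category argument then forces such a group to be discrete (some singleton must have non-empty interior, and translation propagates this to all points). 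A comparison of covolumes now finishes the local step: since $\Gamma_0$ is a lattice in $G$ and $\cN_G(\Gamma_0)$ is a discrete overgroup (hence of positive covolume), one has $[\cN_G(\Gamma_0):\Gamma_0]<\infty$, and hence $\cN_G(\Gamma_0)$ has only finitely many subgroups containing $\Gamma$.

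The main obstacle is exactly this discreteness of $\cN_G(\Gamma_0)$: the reductions to bounded index and to a fixed normal core are formal, but it is the combination of the centralizer hypothesis with the finite generation of $\Gamma$, routed through the Baire-category fact about countable locally compact groups, that produces the crucial structural conclusion from which the counting step is almost immediate.
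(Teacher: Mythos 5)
Your proof is correct and follows essentially the same route as the paper's: bound $[\Lambda:\Gamma]$ by covolume, pass to the normal core $K(\Lambda)=\ker\big(\Lambda\to\mathrm{Sym}(\Gamma\backslash\Lambda)\big)$, use finite generation of $\Gamma$ to limit the possible cores, observe that the centralizer hypothesis makes $\cN_G(K)$ discrete and hence finite over $K$, and count subgroups of the finite quotient. The only divergence is your Baire-category justification of the discreteness of $\cN_G(\Gamma_0)$, which requires $G$ to be $\sigma$-compact (as you note); this can be avoided by observing that $\cZ_G(\Gamma_0)$ is \emph{open} in $\cN_G(\Gamma_0)$, being the preimage of a point under the continuous map $g\mapsto (gsg^{-1})_{s\in S}$ into the discrete set $\Gamma_0^S$ for a finite generating set $S$, so that discreteness of the centralizer immediately forces discreteness of the normalizer.
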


\begin{proof}
For every $\Lambda \in \cL(\Gamma,\varepsilon)$ let $K(\Lambda)$ be the kernel of the permutation action $\Lambda \rightarrow {\rm Sym}(\Gamma \backslash \Lambda)$. Clearly, we have the inclusions
\begin{equation}\label{1.1}
K(\Lambda) < \Gamma < \Lambda < \cN_G\big(K(\Lambda)\big)\,.
\end{equation}

\n
Since $[\Lambda:\Gamma] \le N : = \lfloor { \frac{1}\varepsilon}{\rm Vol} (\Gamma \backslash G) \rfloor$, we have the following estimate for the index of $K(\Lambda)$ in $\Gamma$:
\begin{equation}\label{1.2}
[\Gamma : K(\Lambda)] \le (N -1 )\,!
\end{equation}

\n
Since $\Gamma$ is finitely generated, the set $\Sigma_N$ of subgroups of index at most $(N - 1)!$ is finite; in addition, since $Z_G(K)$ is discrete, so is $\cN_G(K)$, and hence $\cN_G(K) / K$ is finite for every $K \in \Sigma_N$. It follows then from (\ref{1.1}) that
\begin{equation*}
\big| \cL(\Gamma, \varepsilon)\big| \le \dis\sum\limits_{K \in \Sigma_N} a(K)
\end{equation*}

\n
where $a(K)$ is the number of subgroups of the finite group $\cN_G(K)/K$. Thus $\cL(\Gamma, \varepsilon)$ is finite.
\end{proof}

\begin{corollary}\label{cor1.2}
Let $\Gamma < G = {\rm Aut} \,T_1 \times {\rm Aut}\,T_2$ be a co-compact lattice, where $T_1, T_2$ are regular trees. Then $\Gamma$ is finitely generated and $\cZ_G(\Gamma') = \{e\}$, for every $\Gamma'$ of finite index in $\Gamma$. In particular, given any neighborhood $W$ of $e$ in $G$, $R(\Gamma, W)$ is finite.
\end{corollary}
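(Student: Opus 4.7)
The plan is to verify both hypotheses of Proposition~\ref{prop1.1}---that $\Gamma$ is finitely generated and that $\cZ_G(\Gamma') = \{e\}$ for every finite-index $\Gamma' < \Gamma$---and then to invoke that proposition. Finite generation is immediate: $G$ acts properly on the locally finite CAT(0) square complex $T_1 \times T_2$ with compact vertex stabilizers, so the cocompact lattice $\Gamma$ acts properly discontinuously and cocompactly on this complex, and the Milnor-Schwarz lemma yields the conclusion.

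The main content is showing $\cZ_G(\Gamma') = \{e\}$ for an arbitrary finite-index $\Gamma' < \Gamma$. Since $\Gamma'$ is again a cocompact lattice, its projections $H_i := {\rm pr}_i(\Gamma') < {\rm Aut}\,T_i$ act on $T_i$ with finitely many vertex orbits. I would first establish three features of each $H_i$. (a)~Unboundedness: a bounded $H_i$ would confine the $T_i$-component of every $\Gamma'$-orbit in $V(T_1) \times V(T_2)$ to a bounded set, contradicting the finiteness of the $\Gamma'$-orbits since $V(T_j)$ is infinite. (b)~Existence of two hyperbolic elements with distinct axes: the setwise stabilizer in ${\rm Aut}\,T_i$ of a single axis $A$ has vertex orbits indexed by the distance to $A$ and so is not cocompact on $T_i$; hence $H_i$ cannot be contained in such a stabilizer, so some element of $H_i$ moves the axis of a hyperbolic $h \in H_i$, and a suitable conjugate of $h$ furnishes a second hyperbolic with a different axis. (c)~Minimality: for any cocompact action on an infinite tree the limit set is the full boundary, whose convex hull is the whole tree, so $T_i$ is the unique $H_i$-invariant subtree.

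Given these features, take $g = (g_1, g_2) \in \cZ_G(\Gamma')$, so each $g_i$ commutes with $H_i$ and in particular with two hyperbolic elements $h, h' \in H_i$ having distinct axes $A, A'$. Commutation with $h$ forces $g_i$ to preserve $A$ setwise, and since an inversion of $A$ does not commute with the non-trivial translation $h|_A$, $g_i$ acts on $A$ as a (possibly trivial) translation; the same holds for $A'$. A non-trivial translation on $A$ would make $g_i$ hyperbolic with unique invariant geodesic $A$, incompatible with the set-invariance of $A' \ne A$; hence $g_i$ fixes $A$ and $A'$, and therefore also the bridge between them, pointwise. The fixed-point set $F_i := \{v \in V(T_i) : g_i v = v\}$ is thus an infinite subtree; since $g_i$ centralizes $H_i$, the set $F_i$ is $H_i$-invariant, and minimality forces $F_i = V(T_i)$, i.e.\ $g_i = e$. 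Hence $\cZ_G(\Gamma') = \{e\}$, and Proposition~\ref{prop1.1} delivers the finiteness of $R(\Gamma, W)$. The main technical effort I anticipate lies in steps (b) and (c), which is where the tree geometry and the regularity of the $T_i$ genuinely enter.
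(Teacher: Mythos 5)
Your proposal is correct and follows essentially the same route as the paper: verify the two hypotheses of Proposition~\ref{prop1.1} (finite generation from cocompactness, triviality of $\cZ_G(\Gamma')$ via the projections ${\rm pr}_i(\Gamma')$, which act with finitely many vertex orbits on $T_i$) and then invoke that proposition. The only difference is that the paper simply cites the key fact---that the centralizer in ${\rm Aut}\,T_i$ of a group with finitely many vertex orbits is trivial---whereas you supply a (correct) proof of it via hyperbolic elements, axes, and minimality.
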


\begin{proof}
$\Gamma$ is a co-compact lattice in a compactly generated group and hence finitely generated. The assertion about the centralizer follows from the fact, applied to the projections ${\rm pr}_i(\Gamma')$, that the centralizer in ${\rm Aut}\,T_i$ of a group acting with finitely many orbits on the set of vertices of $T_i$ is trivial. The last assertion follows then from Proposition \ref{prop1.1}.
\end{proof}

\section{Products of trees}\label{sec:ProdTrees}
\setcounter{equation}{0}

Let $T = (X,Y)$ be a locally finite tree with vertex set $X$ and edge set $Y$; given a subgroup $H < {\rm Aut}\,T$, the stabilizer $H(x)$ of $x \in X$ induces on the set $E(x)$ of edges issued from $x$ a finite permutation group
\begin{equation*}
\uH(x) < {\rm Sym}\,E(x).
\end{equation*}

\n
We say that $H$ has locally a property $(P)$ if for every $x \in X$, the permutation group $\uH(x)$ has property $(P)$.

\medskip
Here we will be concerned with subgroups $H < {\rm Aut}\,T$ which are locally quasi-primitive. Recall that a finite permutation group $F < {\rm Sym} \,\Omega$ is quasi-primitive if every non-trivial normal subgroup of $F$ acts transitively on $\Omega$. There is a rich structure theory of quasi-primitive groups (see~\cite{Pr}) based on the study of their minimal normal subgroups. In this context $F$ is called almost simple if it has a unique minimal normal subgroup $M$ which is simple non-abelian; in this case $M < F < {\rm Aut}(M)$. Important examples of quasi-primitive permutation groups are given by the $2$-transitive permutation groups. In this context there is Burnside's theorem \cite{DiMo} which says that a $2$-transitive permutation group is either almost simple or the semi-direct product of a finite group with an $\IF_p$-vector space on which it acts linearly with precisely two orbits.

\medskip
We will need a basic result from the theory of locally quasi-primitive groups which we recall for the reader's convenience. Given a closed subgroup $H < {\rm Aut}\, T$ we let $H^{(\infty)}$ denote the intersection of all open subgroups of $H$ of finite index and $QZ(H) < H$ the subgroup consisting of all elements of $H$ whose centralizer is open in $H$. We have then (see~\cite{BuMo1}, Prop.~1.2.1):
\begin{theorem}\label{theo2.1}
Let $H < {\rm Aut} \,T$ be closed, non-discrete, locally quasi-primitive. Then
\begin{itemize}
\item[{\rm (1)}] $H/H^{(\infty)}$ is compact;
\item[{\rm (2)}] $QZ(H)$ acts freely on the set of vertices of $T$;
\item[{\rm (3)}] for any closed normal subgroup $N \triangleleft H$ we have either $N \subset QZ(H)$ or $N \supset H^{(\infty)}$.
\end{itemize}
\end{theorem}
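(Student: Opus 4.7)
The plan is to derive all three statements from a single structural dichotomy supplied by local quasi-primitivity. For any closed normal subgroup $N \triangleleft H$ and any vertex $x$, the subgroup $N \cap H(x)$ is normal in $H(x)$, so its image in the finite quasi-primitive group $\uH(x)$ is either trivial or transitive on $E(x)$. I would prove (3) first from this dichotomy, and then deduce (1) and (2).

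For (3), suppose the image is trivial at some vertex $x_0$. Then $N \cap H(x_0)$ fixes $E(x_0)$ pointwise, lying in $H(y)$ for every neighbor $y$ of $x_0$; iterating the dichotomy at $y$ (its image must be trivial too by the same argument) and propagating along the tree gives $N \cap H(x_0) = \{e\}$. Since $H(x_0)$ is open, $N$ is discrete, and then normality of $N$ together with continuity of $h \mapsto hgh^{-1}$ (mapping into a discrete set) forces $C_H(g)$ open for each $g \in N$, so $N \subset QZ(H)$. In the alternative case, where $N \cap H(x)$ has non-trivial, hence transitive, image in $\uH(x)$ for every $x$, I would show $N \supset H^{(\infty)}$ by proving $N \cdot U = H$ for every open subgroup $U \triangleleft H$ of finite index: local transitivity plus non-discreteness allows one to correct any $h \in H$ modulo $U$ by an element of $N$ via a tree-traversal argument, and intersecting over such $U$ combined with closedness of $N$ gives the inclusion.

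For (1), compactness of $H/H^{(\infty)}$ would be extracted from the local type assumption together with the observation that the kernels $K_x$ of $H(x) \to \uH(x)$ give rise, by propagation across the tree, to an open finite-index normal subgroup of $H$ contained in $H^{(\infty)}$. For (2), I would apply (3) to the closed normal subgroup $N$ generated by a non-trivial $g \in QZ(H)$. Any finite product of conjugates of $g^{\pm 1}$ has open centralizer (a finite intersection of open centralizers), so the non-closed normal subgroup generated by $g$ lies in $QZ(H)$; the alternative $N \supset H^{(\infty)}$ would conflict with (1) and the fact that $H^{(\infty)}$ is topologically large while elements of $QZ(H)$ are highly centralized, so the first alternative of (3) holds and $N$ is discrete. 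A further use of the dichotomy, applied at any hypothetical vertex fixed by $g$ and coupled with the transitive-case argument, then rules out fixed points and yields the freeness.

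The main obstacle, and the technical heart of the theorem, is the transitive sub-case of (3): passing from ``$N \cap H(x) \twoheadrightarrow \uH(x)$ for every $x$'' to the global inclusion $N \supset H^{(\infty)}$. This is not a formal consequence of the dichotomy and requires a joint use of the tree geometry, the non-discreteness of $H$, and the precise topological structure of $H$ as a totally disconnected locally compact group; indeed (1) emerges as a byproduct of this analysis rather than as an independent statement.
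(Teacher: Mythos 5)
First, a remark on the comparison you asked for: the paper does not prove this statement at all --- it is quoted verbatim from \cite{BuMo1}, Proposition 1.2.1 --- so your attempt can only be measured against that reference. Your outline correctly isolates the basic dichotomy (the image of $N\cap H(x)$ in $\uH(x)$ is normal in a quasi-primitive group, hence trivial or transitive) and correctly handles the easy implications: free action on vertices gives $N\cap H(x)=\{e\}$, hence discreteness since $H(x)$ is open, hence $N\subset QZ(H)$ by continuity of conjugation. But the decisive steps are either missing or rest on invalid inferences. In the ``trivial image'' case of (3) you claim that triviality of $\underline{N}(x_0)$ propagates to every neighbour ``by the same argument''; it does not, since the dichotomy at a neighbour allows the transitive alternative. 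What is true is that local quasi-primitivity forces $H$ to have at most two orbits on vertices and normality of $N$ forces the type of $\underline{N}(x)$ to be constant on each bipartition class; but this leaves a genuine mixed case --- trivial on one class, transitive on the other --- which your two-case analysis never addresses and which needs a separate geometric argument.

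In the transitive case, the inference from ``$NU=H$ for every open finite-index $U\triangleleft H$'' to ``$N\supset H^{(\infty)}$'' is simply invalid: if $H$ is topologically simple and non-discrete the hypothesis is vacuous while the conclusion fails for $N=\{e\}$, and in general $\bigcap_U NU$ controls neither $N$ nor $N\cdot H^{(\infty)}$ without a compactness hypothesis you do not have. The workable route is different: transitivity of $\underline{N}(x)$ at every vertex gives $N$ at most two orbits on vertices, so the closed normal subgroup $N$ is \emph{cocompact}; then $H/N$ is profinite, so $N$ is an intersection of open finite-index subgroups of $H$, each of which contains $H^{(\infty)}$ by definition, whence $N\supset H^{(\infty)}$. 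Similar gaps affect (1) --- the kernels of $H(x)\to\uH(x)$ are compact, not of finite index in $H$, so they cannot ``give rise by propagation'' to an open finite-index normal subgroup inside $H^{(\infty)}$; one must instead show directly that every open finite-index normal subgroup is locally transitive, hence cocompact, and that the resulting chain stabilizes --- and (2), where the closure of the normal subgroup generated by $g\in QZ(H)$ need not lie in $QZ(H)$, so your exclusion of the alternative $N\supset H^{(\infty)}$ is only asserted; the standard proof of (2) is instead a direct argument showing that a non-trivial element fixing a vertex cannot centralize an open subgroup of a non-discrete locally quasi-primitive group. In short, the skeleton is right, but each of the three items still lacks its essential step; complete arguments are in \cite{BuMo1}, Section 1.
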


As a warm-up we show a density result for non-discrete groups containing a lattice; such results are well known in the context of Lie groups where they are obtained as a consequence of the Borel density theorem.

\begin{proposition}\label{prop2.2}
For $i = 1, 2$, let $G_i < \Aut(T_i)$ be a non-discrete, locally quasi-primitive, closed subgroup acting cocompactly on $T_i$, such that $G_i^{(\infty)}$ is of finite index in $G_i$. Then any closed cocompact subgroup $L < G_1 \times G_2$ with dense projections on both $G_1$ and $G_2$, is either discrete or open; moreover, in the latter
case it contains $G_1^{(\infty)}\times G_2^{(\infty)}$.

%%%%Let $\Gamma <  {\rm Aut}\,T_1 \times {\rm Aut}\,T_2$ be a co-compact lattice such that $G_i : = \overline{{\rm pr}_i(\Gamma)}$ is non-discrete locally quasi-primitive, and $G_i^{(\infty)}$ is finite index in $G_i$. Let $\Lambda < G_1 \times G_2$ be a subgroup containing $\Gamma$. Then either $\Lambda$ is discrete or its closure $\overline{\Lambda}$ contains $G_1^{(\infty)} \times G_2^{(\infty)}$, in particular it is open in $G_1 \times G_2$.
\end{proposition}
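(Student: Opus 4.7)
My plan is to analyse $L$ through its two ``slice'' subgroups
\[
N_1:=\{g\in G_1:(g,e)\in L\}\qquad\text{and}\qquad N_2:=\{h\in G_2:(e,h)\in L\},
\]
apply Theorem~\ref{theo2.1} to each, and then run the same dichotomy on a well-chosen normal subgroup of $L$. First I verify that each $N_i$ is closed and normal in $G_i$: $N_1\times\{e\}$ is the kernel of $\mathrm{pr}_2|_L$, hence is closed and normal in $L$, so $\mathrm{pr}_1(L)$ normalises $N_1$; as $\mathrm{pr}_1(L)$ is dense in $G_1$, one gets $N_1\trianglelefteq G_1$, and similarly for $N_2$. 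Theorem~\ref{theo2.1}(3) then yields the dichotomy: either $N_i\subset QZ(G_i)$, in which case $N_i$ is discrete because $QZ(G_i)\cap G_i(x_i)=\{e\}$ by Theorem~\ref{theo2.1}(2), or $N_i\supset G_i^{(\infty)}$, in which case $N_i$ is open since $G_i^{(\infty)}$ is closed of finite index.

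Suppose first that one of the slices, say $N_2$, contains $G_2^{(\infty)}$. Then $G_2/N_2$ is finite, and the map $c:\mathrm{pr}_1(L)\to G_2/N_2$ defined by $c(g_1):=g_2 N_2$ for any $(g_1,g_2)\in L$ is a well-defined homomorphism onto this finite group, so its kernel $S_0$ has finite index in $\mathrm{pr}_1(L)$. For every $s\in S_0$ the coset $\{s\}\times N_2$ lies in $L$, whence $S_0\times N_2\subset L$ and, taking closures, $\overline{S_0}\times N_2\subset L$. Since $\overline{S_0}$ has finite index in $\overline{\mathrm{pr}_1(L)}=G_1$ it is open, and therefore $\overline{S_0}\supset G_1^{(\infty)}$. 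This shows $L\supset G_1^{(\infty)}\times G_2^{(\infty)}$, so $L$ is open and the claimed containment holds.

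In the complementary case both $N_i\subset QZ(G_i)$ are discrete, and the aim is to show $L$ is discrete. Set $K_i:=G_i(x_i)$ and $D:=L\cap(K_1\times K_2)$. The kernel $\{e\}\times(N_{3-i}\cap K_{3-i})$ of $\mathrm{pr}_i|_D$ is trivial, because $N_{3-i}\subset QZ(G_{3-i})$ has trivial intersection with the vertex stabiliser $K_{3-i}$; hence $D$ is the graph of a topological-group isomorphism $\phi:H_2\to H_1$ between compact subgroups $H_i\subset K_i$. It suffices to prove $D$ finite, for one then shrinks $K_i$ to avoid the non-identity elements of $D$ and obtains $L\cap(K_1'\times K_2')=\{e\}$. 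Let $E$ be the closed normal subgroup of $L$ generated by $D$. Since $\mathrm{pr}_i(E)$ is normalised by the dense $\mathrm{pr}_i(L)$, the closure $\overline{\mathrm{pr}_i(E)}$ is a closed normal subgroup of $G_i$, and Theorem~\ref{theo2.1}(3) applies once more: if $\overline{\mathrm{pr}_i(E)}\subset QZ(G_i)$ for some $i$, then $H_i\subset\mathrm{pr}_i(E)$ is a compact subgroup of the discrete $QZ(G_i)$, hence finite, and so is $D$.

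I expect the main obstacle to be ruling out the remaining configuration, in which $\overline{\mathrm{pr}_i(E)}\supset G_i^{(\infty)}$ for \emph{both} $i$. To exclude this one should exploit that $E$ is open in $L$ (as it contains the compact open subgroup $D$), together with the cocompactness of $L$ in $G_1\times G_2$ and the density of its projections, to force $L$ itself to be open. But any open $L$ contains a product $U_1\times U_2$ of open neighbourhoods of $e$, whence $N_i\supset U_i$, contradicting the standing hypothesis that $N_i\subset QZ(G_i)$ is discrete. Making this final pincer rigorous is the technical heart of the argument; it likely proceeds through the closed embedding $L/(N_1\times N_2)\hookrightarrow(G_1/N_1)\times(G_2/N_2)$ and an analysis of the resulting closed graph of its dense projections.
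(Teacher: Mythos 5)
Your first half is sound, and in fact cleaner than the paper's at that point: when a slice $N_2=\{h:(e,h)\in L\}$ contains $G_2^{(\infty)}$, your homomorphism $c:\pr_1(L)\to G_2/N_2$ and the observation that $\overline{S_0}\times N_2\subset L$ with $\overline{S_0}$ closed of finite index (hence containing $G_1^{(\infty)}$) gives $L\supset G_1^{(\infty)}\times G_2^{(\infty)}$ directly from the finite-index hypothesis. The paper instead shows $(G_1\times\{e\})\cap L$ is cocompact normal in $G_1$ and rules out the quasi-center alternative via a finitely-generated-subgroup/centralizer argument; both routes work.

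The genuine gap is in the complementary case, where both slices lie in the quasi-centers, and you must show that $L$ is then discrete. This is the actual content of the proposition, and your proposal does not prove it: the reduction to ``$D=L\cap(K_1\times K_2)$ is finite'' via the normal closure $E$ only disposes of the subcase $\overline{\pr_i(E)}\subset QZ(G_i)$, and you explicitly leave open the configuration $\overline{\pr_i(E)}\supset G_i^{(\infty)}$ for both $i$, saying the ``final pincer'' still needs to be made rigorous. There is no evident way to get from ``$E$ is open in $L$'' to ``$L$ is open in $G_1\times G_2$'' without essentially reproving the proposition, so the argument does not close. The paper's proof of this half is a concrete limiting argument: assuming $L$ non-discrete, for each $\ell$ it picks a non-trivial element of $L\cap\bigl(G_1(u)\times G_{2,\ell}(v)\bigr)$ (non-trivial first projection, since the slice acts freely), finds a vertex $u_\ell$ at which that element induces a non-trivial permutation of $E(u_\ell)$, translates $(u_\ell,v)$ into a fixed finite fundamental domain, and uses finiteness of the domain and of the local permutation groups to extract a subsequence $(a_\ell,b_\ell)\to(a,e)$ with $a\neq e$ fixing a vertex. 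This produces a non-trivial, non-freely-acting element of $L\cap(G_1\times\{e\})$, contradicting the standing assumption and feeding back into the first half. Some such compactness/pigeonhole construction is the missing idea in your proposal.
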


\medskip\n
{\bf Notation:} Here and in the sequel, for a tree $T = (X,Y)$, $d_T$ denotes the combinatorial distance on $X$, $B_T(x,\ell)$ the ball of radius $\ell \in \IN$ with center $x$ and, if $H < {\rm Aut}\,T$, $H_\ell(x) = \{g \in H$: $g$ is the identity on $B_T(x,\ell)\}$.

\begin{proof}
Assume that L is not discrete.

\medskip\n
{\bf Claim:} If $L \cap (\{e\} \times G_2)$ doesn't act freely on $X_2$, the set of vertices of $T_2$, then $L \supset G_1^{(\infty)} \times G_2^{(\infty)}$.

\medskip
Indeed, $L \cap (\{e\} \times G_2)$ is, with a slight abuse of notation, a closed subgroup of $G_2$ which is normal in ${\rm pr}_2(L)$, hence in $G_2$ since $\ov{{\rm pr}_2(L)} = G_2$; since it does not act freely on $X_2$ it must by Theorem \ref{theo2.1} contain $\{e\} \times G_2^{(\infty)}$. Consider the continuous injective map
\begin{equation*}
j: (G_1 \times \{e\})/((G_1 \times \{e\}) \cap L) \hookrightarrow (G_1 \times G_2) / L\,.
\end{equation*}

\n
Its image, lifted to $G_1 \times G_2$ is $\big(G_1 \times \{e\}\big) \cdot L \supset G_1 \times G_2^{(\infty)}$ and hence is open, therefore closed. Thus the image of $j$ is closed; since source and target of $j$ are locally compact metrisable we conclude that $j$ is a homeomorphism onto its image, and since $\Im(j) \subset (G_1 \times G_2)/L$ is therefore compact we deduce that $G_1/(G_1 \times \{e\}) \cap L$ is compact.

Thus $(G_1 \times \{ e \}) \cap \ L$ is a cocompact normal subgroup of $G_1$. If it does not
contain $G_1^{(\infty)}$, then by Theorem~\ref{theo2.1} it is discrete and contained in $QZ(G_1)$.
Since it is cocompact, it is finitely generated. Any finitely generated subgroup of the quasi-center has an open centraliser; but the centraliser of a group acting cocompactly on $T_1$ must be trivial. This implies that $G_1$ is discrete, a contradiction. Hence $G_1^{(\infty)} \subset(G_1 \times \{ e \}) \cap L$. Arguing similarly for $G_2^{(\infty)}$ completes the proof of the claim.

%	%	We claim that it follows that the action of $L\cap(G_1\times \{e\})$ on $T_1$ is not free. Indeed we may choose a finite subset $S\subset L\cap(G_1\times \{e\})$ such that $\langle S \rangle\backslash T_1$ is finite. This implies that the centralizer of $S$ in $G_1$ is trivial. It now follows using the fact that the projection of $L$ to $G_1$ is dense that some commutator of an element of $s\in S$ and a element of $L$ with sufficiently small projection in $G_1$ is non trivial and fixes some vertex in $T_1$. Applying again as above Theorem~\ref{theo2.1} we deduce that $L\supset (G_1^{(\infty)}\times \{e\})$, proving the claim.

%	
% But by a similar use of Theorem \ref{theo2.1} we deduce that $L \supset G_1^{(\infty)}$; this proves the claim.

\medskip
Let now $(u,v) \in X_1 \times X_2$ and consider for every $\ell \ge 1$ the intersection $L \cap G_1(u) \times G_{2,\ell}(v)$. Since $L$ is non-discrete this intersection is non-trivial $\forall \ell \ge 1$. We distinguish two cases:
\begin{itemize}
\item[(1)] $L \cap \big(G_1(u) \times G_{2,\ell}(v)\big) \subset \{e\} \times G_{2,\ell}(v)$
for some $\ell \ge 1$.

\item[(2)] $L \cap \big(G_1(u) \times G_{2,\ell}(v)\big)$ has for every $\ell \ge 1$ a non-trivial projection on the first factor.
\end{itemize}

\medskip\n
In the first case we have that $L \cap (\{e\} \times G_2)$ does not act freely on $X_2$ and hence by the claim $L \supset G_1^{(\infty)} \times G_2^{(\infty)}$; but since $G_1^{(\infty)}$ is non-discrete as well, we have that $L \cap \big(G_1(u) \times G_{2,\ell}(v)\big)$ has non-trivial first projection, so this case does not occur.

\medskip
In order to analyze  the second case let $\cD_1 \times \cD_2 \subset X_1 \times X_2$ be a finite set with $\bigcup_{\gamma \in \Gamma} \gamma(\cD_1 \times \cD_2) = X_1 \times X_2$.

\medskip
For every $\ell \ge 1$, let $u_\ell \in X_1$ be such that the projection of $L \cap \big(G_1(u_\ell) \times G_{2,\ell}(v)\big)$ into $\uG_1(u_\ell)$ is non-trivial; such a $u_\ell$ can be obtained by considering the subtree $T_1^\ell$ spanned by the ${pr}_1\big(L \cap G_1(u) \times G_{2,\ell}(v)\big)$-fixed vertices in $X_1$ and taking for $u_\ell$ a vertex in $T_1^\ell$ which has a neighbor which is not in $T_1^\ell$.

\medskip
Use now the $\Gamma$-action to find $(w_\ell,v_\ell) \in (\cD_1 \times \cD_2) \cap \Gamma \cdot(u_\ell,v)$ with the property that $L \cap \big(G_1(w_\ell) \times G_{2,\ell}(v_\ell)\big)$ projects non-trivially to $\uG_1(w_\ell)$.

\medskip
Since $\cD_1 \times \cD_2$ is finite and $\uG_1(w)$ is finite we may assume, by passing to an appropriate subsequence, that $(w_\ell,v_\ell) = (w_0,v_0)$ and that there is $(a_\ell,b_\ell) \in L \cap \big(G_1(w_0) \times G_{2,\ell}(v_0)\big)$ such that $a_\ell$ induces on $E(w_0)$ a fixed non-trivial permutation. Passing to subsequence again we may assume that $(a_\ell,b_\ell)$ converges; its limit must necessarily have the form $(a,e)$ where $a \in G_1(w_0)$, $a \not= e$. This shows that $L \cap (G_1(w_0) \times \{e\})$ is non-trivial; hence $L \cap (G_1 \times \{e\})$ acts non-freely on $X_1$ and by the claim applied to $G_1$ instead of $G_2$ we deduce $L \supset G_1^{(\infty)} \times G_2^{(\infty)}$.
\end{proof}

%The main step in the proof of Theorem \ref{theo0.1} in the introduction is the following lemma:

%%pages 1.1-1.2 of notes
%%%%%%%%%%%%
We turn now to our main theme and prove a general result implying Theorem~\ref{theo0.1} of the introduction.
This result which is of a more technical nature requires some preliminary comments.
A quasi-primitive permutation group has either a unique minimal normal subgroup or two minimal normal subgroups (cf. \cite{Pr}).
In the latter case  each of them acts regularly and they are isomorphic. Thus in both cases the isomorphism class of a simple factor of a minimal normal subgroup of a quasi-primitive permutation group is well defined.

Let $G_i<\Aut T_i$ be of constant type and locally quasiprimitive. We will denote by $M_i$ a simple factor of a minimal normal subgroup of $\uG_i(x)$ and by $S_i$ the stabilizer in $\uG_i(x)$ of a typical element in $E(x)$.

\begin{theorem}\label{thm:M}
Let $G_i<\Aut T_i$ be a closed, non-discrete, constant type and locally quasi-primitive group. Let $\Gamma< G_1 \times G_2$ be a cocompact lattice with dense projections. Then there is a neighborhood $W$ of $(e,e)$ in $G_1\times G_2$ with the following property: If there is a discrete subgroup $\Lambda < G_1 \times G_2$ containing $\Gamma$ with $\Lambda \cap W \neq \{ (e,e)\}$ then $M_1$ is a section of $S_2$ and $M_2$ is a section of $S_1$.
\end{theorem}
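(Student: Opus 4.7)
The plan is to take $W:=G_{1,L}(u_1)\times G_{2,L}(u_2)$ for fixed base vertices $u_i\in X_i$ and $L$ a large integer, and to analyze a non-trivial $(g_1,g_2)\in\Lambda\cap W$. The first step is to show both $g_i\neq e$. If, say, $g_1=e$, then the normal closure $N$ of $(e,g_2)$ in $\Gamma$ lies in $\{e\}\times G_2$ and $\overline{\pr_2(N)}$ is normalized by the dense subgroup $\pr_2(\Gamma)$, hence is normal in $G_2$; as $g_2\neq e$ fixes $u_2$ it is not in $QZ(G_2)$ by Theorem~\ref{theo2.1}(2), so Theorem~\ref{theo2.1}(3) forces $\overline{\pr_2(N)}\supset G_2^{(\infty)}$, and since $\Lambda$ is closed we get $\{e\}\times G_2^{(\infty)}\subset\Lambda$, contradicting discreteness. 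Run symmetrically, this argument also yields $\Lambda\cap(G_1\times\{e\})=\Lambda\cap(\{e\}\times G_2)=\{(e,e)\}$, so $\pr_i|_\Lambda$ is injective for both $i$.

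Next I localize: let $r_i\geq L$ be the largest integer with $g_i\in G_{i,r_i}(u_i)$, pick $y_i\in X_i$ at distance $r_i$ from $u_i$ on which $g_i$ moves some edge, and let $y_i^-$ be the neighbor of $y_i$ toward $u_i$ and $e_i:=\{y_i,y_i^-\}$. Then $g_i\in G_{e_i}:=G_i(y_i)\cap G_i(y_i^-)$ and its induced local permutation on $E(y_i)$ is non-trivial and fixes $e_i$, so defines a non-trivial element of the edge stabilizer, isomorphic to $S_i$ by the constant-type hypothesis.

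The crucial construction is the subgroup $\Phi_1:=\Lambda\cap(G_{e_1}\times G_2)$, which contains $(g_1,g_2)$, and its image $A_1$ under the local-action map $\alpha\colon\Phi_1\to \uG_1(y_1)$, $(h_1,h_2)\mapsto\bar h_1|_{E(y_1)}$, whose range lies in the edge-stabilizer copy of $S_1$. Conjugation by $\Gamma_1:=\Gamma\cap(G_{e_1}\times G_2)$ preserves $\Phi_1$; since $\pr_1(\Gamma_1)=\pr_1(\Gamma)\cap G_{e_1}$ is dense in the open subgroup $G_{e_1}$ of $G_1$, its image in $\uG_1(y_1)$ is all of $S_1$, so the induced conjugation action on $A_1$ realizes every inner automorphism of $S_1$, whence $A_1\triangleleft S_1$. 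It is non-trivial since it contains $\bar g_1$. By $\pr_2|_\Lambda$-injectivity, $\Phi_1\cong\pr_2(\Phi_1)\leq G_2$, so $A_1\cong\pr_2(\Phi_1)/\pr_2(\ker\alpha)$ with $\ker\alpha=\Lambda\cap(G_{1,1}(y_1)\times G_2)$; the symmetric construction with $\Phi_2:=\Lambda\cap(G_1\times G_{e_2})$ produces $A_2\triangleleft S_2$ non-trivial together with $A_2\cong\pr_1(\Phi_2)/\pr_1(\ker\beta)$.

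The final step is to exhibit $M_2$ as a section of $A_1$, and symmetrically $M_1$ as a section of $A_2$. The expected mechanism is that the $G_2$-side isomorphism $A_1\cong\pr_2(\Phi_1)/\pr_2(\ker\alpha)$, once one restricts to elements of $\pr_2(\Phi_1)$ stabilizing the edge $e_2$ and passes to the local action at $y_2$, produces a quotient isomorphic to a normal subgroup of $\uG_2(y_2)$ which, by the classification of socles of quasi-primitive groups, contains the simple factor $M_2$; Goursat's lemma applied to the joint image of $\Lambda\cap(G_{e_1}\times G_{e_2})$ in $S_1\times S_2$ then exhibits $M_2$ as a subquotient of $A_1\leq S_1$. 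The \emph{main obstacle} is the passage from global normal-subgroup information (via Theorem~\ref{theo2.1}(3) and density of $\pr_i(\Gamma)$) to this local matching: unlike $\pr_1(\Gamma_1)$, the second projection $\pr_2(\Gamma_1)$ need not be dense in $G_2$, so Theorem~\ref{theo2.1}(3) must be applied inside an appropriate closed subgroup of $G_2$ rather than $G_2$ itself, making use of the discreteness of $\Lambda$ to rule out the quasi-center alternative; the constant-type hypothesis then allows one to transport the resulting local data between different vertices and identify the socle factor with $M_2$, yielding the desired sections $M_1$ of $S_2$ and $M_2$ of $S_1$.
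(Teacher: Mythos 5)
Your preparatory steps are sound and coincide with the paper's: the injectivity of both projections restricted to $\Lambda$ (via Theorem~\ref{theo2.1} and the free action of the quasi-centre) is exactly Remark~\ref{rem:inject}, and the localization to a boundary vertex $y_1$ of the fixed subtree where $g_1$ induces a non-trivial local permutation also appears in the paper. But the core of your argument has a genuine gap. Conjugating $\Phi_1=\Lambda\cap(G_{e_1}\times G_2)$ by $\Gamma\cap(G_{e_1}\times G_2)$ only shows that $A_1$ is normal in the edge stabilizer $S_1$, and a non-trivial normal subgroup of a point stabilizer of a quasi-primitive group carries no useful structural information --- it need not contain, nor be related to, the socle factor. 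The paper's key step (Lemma~\ref{lem2.3}) instead produces a non-trivial normal subgroup of the \emph{full} local group $\uG_1(w)$, and only then does quasi-primitivity force it to contain $M_1$. To get normality in $\uG_1(w)$ one must conjugate by elements of $\Gamma$ whose first coordinates represent all of $\uG_1(w)$; such elements cannot have trivial second coordinate, so the paper fixes finitely many representatives $\gamma_i^w\in\Gamma\cap(G_1(w)\times G_2)$, lets $R$ bound the displacement of their second coordinates, and takes $W_1=G_1(u)\times G_{2,R+\ell}(v)$, so that conjugation lands back inside $\Lambda\cap\big(G_1(w)\times G_{2,\ell}(v)\big)$. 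This bounded-displacement device is precisely what determines $W$ (your ``large $L$'' is never pinned down, and nothing in your construction dictates how large it must be), and it is also what keeps the relevant group inside the compact set $G_1(w)\times G_{2,1}(v)$; by contrast your $\Phi_1$ lives in the non-compact $G_{e_1}\times G_2$, so $\pr_2(\Phi_1)$ is an a priori infinite subgroup of $G_2$ about which nothing structural can be said.

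The second problem is that the step which actually yields the conclusion --- exhibiting $M_1$ as a section of $S_2$ --- is only described as an ``expected mechanism'' with a self-acknowledged ``main obstacle,'' and the Goursat-type matching is not carried out; I do not see how to complete it as stated. The paper's route is more direct and avoids any local analysis at a vertex of $T_2$: once the image of $F^1_{(w,v)}=\Lambda\cap\big(G_1(w)\times G_{2,1}(v)\big)$ in $\uG_1(w)$ contains a non-trivial normal subgroup and hence $M_1$, the finite group $F^1_{(w,v)}$ injects into $G_{2,1}(v)$ by Remark~\ref{rem:inject}, and the filtration by the $G_{2,j}(v)$, whose successive quotients embed in $S_2^{a(j)}$, shows that every composition factor of $F^1_{(w,v)}$ --- in particular $M_1$ --- is a section of $S_2$. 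You should replace the $\Phi_1$, $A_1$ construction by this conjugation-with-bounded-displacement argument followed by the Jordan--H\"older argument on the second factor.
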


\begin{lemma}\label{lem2.3}
Let $\Gamma < {\rm Aut} \,T_1 \times {\rm Aut}\,T_2$ be a co-compact lattice such that the closure of the projections  $G_i : = \ov{{\rm pr}_i(\Gamma)}$ are non-discrete, locally quasi-primitive. Given $\ell \ge 1$, there is $W_1$ neighborhood of $(e,e)$ in $G_1 \times G_2$ depending on $\ell$ and $\Gamma$ with the property that if $\Lambda$ is a discrete subgroup of $G_1 \times G_2$ containing $\Gamma$, with $\Lambda \cap W_1 \not= (e,e)$, then there is $(w,v) \in X_1 \times X_2$ such that the image of $\Lambda \cap \big(G_1(w) \times G_{2,\ell}(v)\big)$ in $\uG_1(w)$ contains a non-trivial normal subgroup of $\uG_1(w)$.
\end{lemma}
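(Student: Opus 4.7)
The plan is to construct $W_1$ by a compactness/cocompactness argument, and then, given a non-trivial element $(a,b) \in \Lambda \cap W_1$ with $a \ne e$, to generate a full $\uG_1(w^*)$-conjugacy class inside the desired image by conjugating $(a,b)$ by a prearranged finite list of elements of $\Gamma$.

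First I use cocompactness to pick finite sets $\cD_1 \subset X_1$, $\cD_2 \subset X_2$ meeting every $\Gamma$-orbit of vertices. Since $\pr_1(\Gamma)$ is dense in $G_1$ and $G_1(w^*)$ is an open subgroup surjecting onto the finite group $\uG_1(w^*)$, for each $w^* \in \cD_1$ and each $\tau \in \uG_1(w^*)$ I can choose $\gamma_{w^*,\tau} = (g_{w^*,\tau}, h_{w^*,\tau}) \in \Gamma$ with $g_{w^*,\tau} \in G_1(w^*)$ and induced permutation $\tau$ on $E(w^*)$. Setting
$$D := \max_{w^* \in \cD_1,\, v^* \in \cD_2,\, \tau \in \uG_1(w^*)} d_{T_2}(v^*, h_{w^*,\tau}\, v^*)$$
gives a finite quantity depending only on $\Gamma$ and these finitely many choices. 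I then fix base vertices $u_0 \in X_1$, $v_0 \in X_2$ arbitrarily and take $W_1 := G_1(u_0) \times G_{2,\ell+D}(v_0)$.

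The main case is $(a,b) \in \Lambda \cap W_1 \setminus \{(e,e)\}$ with $a \ne e$. Then $a$ fixes $u_0$, hence is elliptic, so I can pick a vertex $w$ on the boundary of the fixed subtree of $a$: $a \in G_1(w)$ and $\underline{a} \ne e$ in $\uG_1(w)$. After transporting $(w, v_0)$ into $\cD_1 \times \cD_2$ via some $\gamma = (g, h) \in \Gamma$, the conjugate $\gamma (a,b) \gamma^{-1}$ lies in $\Lambda \cap \bigl(G_1(w^*) \times G_{2,\ell+D}(v^*)\bigr)$ with non-trivial image $\sigma \in \uG_1(w^*)$. Further conjugating by each $\gamma_{w^*,\tau}$ yields an element of $\Lambda$ whose first component stays in $G_1(w^*)$ and induces $\tau \sigma \tau^{-1}$, while whose second component still fixes $B_{T_2}(v^*, \ell)$ by the bound defining $D$. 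Hence the image of $\Lambda \cap \bigl(G_1(w^*) \times G_{2,\ell}(v^*)\bigr)$ in $\uG_1(w^*)$ contains the whole $\uG_1(w^*)$-conjugacy class of $\sigma$, and therefore the non-trivial normal subgroup it generates.

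The hard part will be the degenerate case $a = e$, $b \ne e$. Here $N_2 := \Lambda \cap (\{e\} \times G_2)$ is non-trivial, discrete, and normalized by the dense subgroup $\pr_2(\Gamma)$, hence normal in $G_2$. Theorem~\ref{theo2.1}(3) forces either $N_2 \subset QZ(G_2)$, impossible by Theorem~\ref{theo2.1}(2) since $b$ fixes $v_0$ while $QZ(G_2)$ acts freely on vertices, or $N_2 \supset G_2^{(\infty)}$, which forces $G_2^{(\infty)}$ itself to be discrete. My plan for this subcase is to use the inclusion $\{e\} \times G_2^{(\infty)} \subset \Lambda$ and multiply $(e,b)$ by a suitably chosen element of $\Gamma$ to produce an element of $\Lambda \cap W_1$ with non-trivial first component, reducing to the main case; arranging this uniformly in $\Lambda$ will be the technical crux.
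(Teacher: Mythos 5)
Your construction of $W_1$ and your treatment of the main case ($a\neq e$) coincide with the paper's argument: the paper also fixes finite fundamental sets $\cD_1\times\cD_2$, chooses finitely many elements of $\Gamma\cap(G_1(w^*)\times G_2)$ representing all of $\uG_1(w^*)$, bounds the displacement of the second components by a constant ($R$ in the paper, your $D$), takes $W_1=G_1(u)\times G_{2,R+\ell}(v)$, locates $w$ on the boundary of the fixed subtree, transports into $\cD_1\times\cD_2$, and conjugates to sweep out a full conjugacy class in $\uG_1(w^*)$, hence a non-trivial normal subgroup. That part is correct and is essentially the paper's proof.

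The genuine gap is your ``degenerate case'' $a=e$, $b\neq e$, which you explicitly leave unresolved. Two remarks. First, the two branches you extract from Theorem~\ref{theo2.1}(3) are not mutually exclusive, and the second branch does not survive the same test you applied to the first: $N_2=\Lambda\cap(\{e\}\times G_2)$ is a \emph{discrete} normal subgroup of $G_2$, and for any such subgroup the conjugation map $g\mapsto gng^{-1}$ is a continuous map into a discrete set, hence locally constant, so every $n\in N_2$ has open centralizer. Therefore $N_2\subset QZ(G_2)$ holds unconditionally (whether or not $N_2\supset G_2^{(\infty)}$), and Theorem~\ref{theo2.1}(2) then contradicts the fact that $b$ fixes $v_0$. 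This is exactly the paper's Remark~\ref{rem:inject}: the degenerate case simply cannot occur, so no reduction to the main case is needed. Second, the workaround you sketch --- multiplying $(e,b)$ by ``a suitably chosen element of $\Gamma$'' to manufacture an element of $\Lambda\cap W_1$ with non-trivial first component --- is not only unnecessary but problematic: there is no reason for $\Gamma\cap W_1$ to contain anything other than the identity (indeed for $\Gamma$ itself discrete and $W_1$ small one expects exactly that), so such a product will generally leave $W_1$. Replace that paragraph by the quasi-center argument above and the proof is complete.
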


\noindent
\begin{remark}\label{rem:inject}
In the proof of Lemma~\ref{lem2.3} and of Theorem~\ref{thm:M} we will use the following fact: If $F<\Lambda\cap (G_1(a)\times G_2(b))$ is a subgroup then $F$ injects into each of the two factors. Indeed, for instance, the intersection $F\cap(\{e\} \times G_2)$ is contained in the discrete subgroup $\Lambda\cap(\{e\} \times G_2)$ which is normal in $\overline{\pr_2(\Lambda)}=G_2$ and hence must be contained in $\{e\} \times QZ(G_2)$ by theorem 2.1. But $QZ(G_2)$ acts freely in $X_2$ and $F$ fixes $(a,b)$ which implies $F\cap (\{e\} \times G_2) = \{(e,e)\}$
\end{remark}

\begin{proof} {\em (of Lemma 2.4.)}
Since ${\rm pr}_1(\Gamma)$ is dense in $G_1$, the projection onto $G_1(x)$ of $\Gamma \cap (G_1(x) \times G_2)$ is dense and hence we can choose $\gamma^x_1,\gamma^x_2,\dots,\gamma^x_r$ in $\Gamma \cap (G_1(x) \times G_2)$ which are representatives of the elements of the finite group $\uG_1(x)$; observe that by repeating elements we can take $r$ independent of $x$. Let $\cD_1 \times \cD_2 \subset X_1 \times X_2$ be a finite set such that $\bigcup_{\gamma \in \Gamma} \gamma(\cD_1 \times \cD_2) = X_1 \times X_2$ and define
\begin{equation*}
R : = \max\limits_{1 \le i \le r} \; \max\limits_{(u,v) \in \cD_1 \times \cD_2} \,d_{T_2} (\psi^u_i (v),v)
\end{equation*}
where $\gamma_i^u = (\varphi^u_i,\psi^u_i)$.

\medskip
Fix $(u,v) \in \cD_1 \times \cD_2$, then we claim that $W_1 : = G_1(u) \times G_{2,R + \ell}(v)$ is the neighborhood announced in the lemma. For ease of notation denote for every $n \ge 0$, $(a,b) \in X_1 \times X_2$ by $F^n_{(a,b)}$ the intersection $\Lambda \cap G_1(a) \times G_{2,n}(b)$. 
By Remark~\ref{rem:inject} we observe first that $F^n_{(a,b)} \cap ( \{e\} \times G_2) = (e,e)$.
% indeed this intersection is contained in the discrete subgroup $\Lambda \cap (\{e\} \times G_2)$ which is normal in $\ov{{\rm pr}_2(\Lambda)} = G_2$ and hence must be contained in $\{e\} \times QZ(G_2)$ by Theorem \ref{theo2.1}; but $QZ(G_2)$ acts freely on $X_2$ and $F^n_{a,b}$ fixes $b \in X_2$ which implies that $F^n_{a,b} \cap (\{e\} \times G_2) = (e,e)$.

\medskip
Assume now that $F_{(u,v)}^{R + \ell} = \Lambda \cap W_1 \not= (e,e)$. According to the preceding observation, we know that $F_{(u,v)}^{R+ \ell}$ injects into $G_1(u)$; since this group is non-trivial we can by the same argument as in Proposition \ref{prop2.2} find a vertex $w \in X_1$ which is $F^{R + \ell}_{(u,v)}$-fixed and such that  the image of $F_{(u,v)}^{R + \ell}$ in $\uG_1(w)$ is not trivial; as a result $F_{(w,v)}^{R + \ell}$ has non-trivial image in $\uG_1(w)$. Using the $\Gamma$-action we may assume that $(w,v) \in \cD_1 \times \cD_2$. Next we claim that for all $1 \le i \le r$:
\begin{equation*}
(\gamma_i^w)^{-1} \,F^{R+ \ell}_{(w,v)} \;\gamma^w_i \subset F_{(w,v)}^\ell .
\end{equation*}

\n
Let $(g,h) \in F_{(w,v)}^{R + \ell} = \Lambda \cap \big(G_1(w) \times G_{2, R+ \ell} (v)\big)$. Then $(\gamma_i^w)^{-1} (g,h) \;\gamma_i^w = \big((\varphi_i^w)^{-1}\, g  \varphi_i^w, \,(\psi_i^w)^{-1}\, h \psi_i^w\big)$ and $(\varphi_i^w)^{-1} g\,\varphi_i^w \in G_1(w)$ since all terms involved are in $G_1(w)$. For the second component let $x \in B_{T_2}(v,\ell)$, then
\begin{equation*}
\begin{split}
d_{T_2} (\psi^w_i(x),v) & \le d\big(\psi_i^w(x), \, \psi_i^w(v)\big) +d\big(\psi^w_i(v),v\big)
\\
& \le \ell + R
\end{split}
\end{equation*}
and thus $h \psi_i^w(x) = \psi_i^w(x)$ or $(\psi_i^w)^{-1} h\,\psi_i^w(x) = x$. This shows the claim.

\medskip
Thus the image of $F_{(w,v)}^\ell$ in $\uG_1(w)$ contains all conjugates of some non-trivial element and hence contains a non-trivial normal subgroup of $\uG_1(w)$.
\end{proof}

\medskip\n
%%Pages 3.1-3.3 of notes
%%%%%%%%%%%%
{\bf Proof of Theorem~\ref{thm:M}.}
Let $W_1$, $(w,v)\in X_1 \times X_2$ be given by lemma~\ref{lem2.3} where we take $\ell=1$. In the notation of the proof of Lemma~\ref{lem2.3} we have that the image of $F_{(w,v)}^1$ in $\uG_1(w)$ contains a non-trivial normal subgroup and hence $M_1$.
Thus the simple group $M_1$ is a section of $F_{(w,v)}^1$. By Remark~\ref{rem:inject} the group $F_{(w,v)}^1$ injects into $G_{2,1}(v)$. For every $j\ge 1$, the quotient $G_{2,j}(v)/G_{2,j+1}(v)$ is isomorphic to  a subgroup of $S_2^{a(j)}$ where $a(j)$ is the cardinality of the sphere of radius $j$ in $T_2$. Thus $F_{(w,v)}^1$ admits a Jordan-H\"older series consisting of subgroups of $S_2$ and hence $M_1$ is a section of $S_2$.
\qed

\medskip\noindent
{\bf Proof of Theorem~\ref{theo0.1}.}
Let $W$ be the neighborhood of $(e,e)\in G_1 \times G_2$ given by Theorem~\ref{thm:M} and assume that $\Lambda< G_1 \times G_2$ is a discrete subgroup containing $\Gamma$ with $\Lambda \cap W \neq \{(e,e)\}$.
But now $M_i$ coincides with the unique minimal normal subgroup of $\uG_i(x)$, in particular it acts transitively; thus $|M_i \cap S_i | < | M_i |$, since $|M_i/(M_i \cap S_i)|$ is the degree of the tree $T_i$. Since the centralizer of $M_i$ in $\uG_i(x)$ is trivial, the group $S_i/(S_i \cap M_i)$ is isomorphic to  subgroup of ${\rm Out}(M_i)$ which by the Schreier conjecture  (which asserts that the group of outer automorphisms of a finite simple group is solvable) (cf. {\cite{DiMo}, page 133}) is solvable.
Since $M_1$ is a section of $S_2$ this implies that it is a section of $S_2 \cap M_2$ and hence 
$|M_1|\le |S_2 \cap M_2|<|M_2|$.
By the same argument interchanging between the indices $1$ and $2$ we get also $|M_2| < |M_1|$ and thus a contradiction.
\qed

\medskip
\noindent
{\bf Acknowledgment.}
The authors thank the referee for the careful reading of the manuscript and the useful suggestions for improvements.

\end{document}